\newcommand*\patchAmsMathEnvironmentForLineno[1]{%
  \expandafter\let\csname old#1\expandafter\endcsname\csname #1\endcsname
  \expandafter\let\csname oldend#1\expandafter\endcsname\csname end#1\endcsname
  \renewenvironment{#1}%
     {\linenomath\csname old#1\endcsname}%
     {\csname oldend#1\endcsname\endlinenomath}}% 
\newcommand*\patchBothAmsMathEnvironmentsForLineno[1]{%
  \patchAmsMathEnvironmentForLineno{#1}%
  \patchAmsMathEnvironmentForLineno{#1*}}%
\def\Cr{\text{\rm cr}}
\def\rcr{\overline{\text{\rm cr}}}
\def\floor#1{\lfloor{#1}\rfloor}
\def\bigfloor#1{\bigl\lfloor{#1}\bigr\rfloor}
\def\Bigfloor#1{\Bigl\lfloor{#1}\Bigr\rfloor}
\def\dd{\mathcal D}
\def\rr{\mathcal R}
\def\ee{\mathcal E}
\def\hh{\mathcal H}
\def\Nat{\mathbb{N}}
\def\cc{\mathcal C}
\def\mm{\mathcal M}
\newcommand{\cA}{\mathcal{A}}
\newcommand{\cF}{\mathcal{\rr}}
\newcommand{\cK}{\mathcal{K}}
\newcommand{\Real}{\mathbb{R}}
\DeclareMathOperator{\Hom}{Hom}
\newtheorem{theorem}{Theorem} %Defines \begin{theorem} to write "Theorem"
\newtheorem{theorem*}{Theorem} %Defines \begin{theorem} to write "Theorem"
\theoremstyle{definition}
\theoremstyle{remark}
\newcommand{\oururl}{\url{http://lidicky.name/pub/hill/}}
\newcommand{\vc}[1]{\ensuremath{\vcenter{\hbox{#1}}}}
\newcommand*\kf{\scalebox{0.2}{\begin{tikzpicture}
\filldraw (0,0) circle (1.mm);
\filldraw (0,1) circle (1.mm);
\filldraw(-0.87,-0.5) circle (1.mm);
\filldraw(0.87,-0.5) circle (1.mm);
\draw(0,0) -- (0,1);
\draw(0,0) -- (-0.87,-0.5);
\draw(0,0) -- (0.87,-0.5);
\draw(-0.87,-0.5)--(0.87,-0.5);
\draw(0,1) -- (0.87,-0.5);
\draw(0,1) -- (-0.87,-0.5);
\end{tikzpicture}}}
\def\yu{{N_4}}
\begin{document}

%\linenumbers  %% This enables line numbers in the whole document.

\title{Closing in on Hill's conjecture}
\author{
J\'ozsef Balogh \thanks{Department of Mathematical Sciences,
University of Illinois at Urbana-Champaign, Urbana, Illinois 61801, USA, {\tt
jobal@math.uiuc.edu}. Research is partially supported by NSF Grant DMS-1500121  and the Langan Scholar Fund (UIUC).}  \and
Bernard Lidick\'{y} \thanks{Department of Mathematics, Iowa State University. Ames, IA, USA. E-mail: {\tt lidicky@iastate.edu}. Research of this author is supported in part by NSF grant DMS-1600390.}
\and
Gelasio Salazar \thanks{Instituto de F\'\i sica, Universidad Aut\'onoma de San Luis Potos\'{\i}. San Luis Potos\'{\i}, Mexico. E-mail: {\tt gsalazar@ifisica.uaslp.mx}. Research of this author is supported by Conacyt grant 222667.} }

\maketitle

\begin{abstract}
Borrowing L\'aszl\'o Sz\'ekely's lively expression, we show that Hill's conjecture is ``asymptotically at least $98.5\%$ true''. This long-standing conjecture states that the crossing number $\Cr(K_n)$ of the complete graph $K_n$ is $H(n) := \frac{1}{4}\floor{\frac{n}{2}}\floor{\frac{n-1}{2}}\floor{\frac{n-2}{2}}\floor{\frac{n-3}{2}}$, for all $n\ge 3$. This has been verified only for $n\le 12$. Using the flag algebra framework, Norin and Zwols obtained the best known asymptotic lower bound for the crossing number of complete bipartite graphs, from which it follows that for every sufficiently large $n$, $\Cr(K_n) > 0.905\, H(n)$. Also using this framework, we prove that asymptotically $\Cr(K_n)$ is at least $0.985\, H(n)$. We also show that the spherical geodesic crossing number of $K_n$ is asymptotically at least $0.996\, H(n)$.
\end{abstract}

\section{Introduction}\label{sec:intro}

A long standing open problem in topological graph theory is to determine the crossing number of the complete graph $K_n$. We recall that the {\em crossing number} $\Cr(G)$ of a graph $G$ is the minimum number of pairwise crossings of edges in a drawing of $G$ in the plane. 

\subsection{Our main results}\label{subsec:mainresult}

As narrated in the illustrative survey by Beineke and Wilson~\cite{Beineke}, the problem of estimating the crossing number of complete graphs seems to have been first explored by the British artist Anthony Hill in the late 1950s. Hill found a construction that yields a drawing of $K_n$ with exactly $\frac{1}{4}\floor{\frac{n}{2}}\floor{\frac{n-1}{2}}\floor{\frac{n-2}{2}}\floor{\frac{n-3}{2}}$ crossings, for every integer $n \ge 3$~\cite{HararyHill}. In that paper, the following conjecture was put forward:

\bigskip
\noindent{\bf Conjecture.} (Hill's conjecture) \\
\[
\Cr(K_n) = H(n) := \frac{1}{4}\Bigfloor{\frac{n}{2}}\Bigfloor{\frac{n-1}{2}}\Bigfloor{\frac{n-2}{2}}\Bigfloor{\frac{n-3}{2}}.
\]
\bigskip

As we recall below in our discussion of previous work, Hill's conjecture has been only verified for $n\le 12$, and it follows from work by Norin and Zwols~\cite{Norin} that $\lim_{n\to\infty}\Cr(K_n)/H(n) > 0.905$. Our main result in this paper is the following.

\begin{theorem}\label{thm:maintheorem}
\[
\lim_{n\to\infty} \frac{\Cr(K_n)}{H(n)} > 0.98559895.
\]
\end{theorem}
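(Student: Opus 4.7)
The plan is to combine a density reformulation of Hill's conjecture with the flag algebra method applied to good drawings of $K_n$. First, I would restrict attention to \emph{good} drawings---those in which no two edges meet more than once, no self-crossings occur, and adjacent edges do not cross---which suffices since optimal drawings can be taken to be good. In any such drawing $D$ of $K_n$, every crossing is supported on a uniquely determined set of $4$ vertices, so
\[
\Cr(D) \;=\; \sum_{S \in \binom{V(K_n)}{4}} \Cr(D[S]),
\]
where $D[S]$ is the induced sub-drawing and $\Cr(D[S]) \in \{0,1\}$. Since $H(n)/\binom{n}{4} \to 3/8$, the theorem reduces to showing that
\[
\liminf_{n\to\infty} \frac{\Cr(D_n)}{\binom{n}{4}} \;\ge\; \frac{3}{8}\cdot 0.98559895
\]
for any sequence of good drawings $(D_n)$; that is, to a lower bound on the limiting density of crossed $K_4$-subdrawings.

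Next, I would set up a flag algebra whose underlying combinatorial objects are equivalence classes of good drawings (rotation systems, or equivalently the sets of crossing pairs of edges on each $4$-subset, subject to the axioms characterising realizable good drawings), and enumerate all flags on $N$ vertices for the largest $N$ that is computationally feasible, say $N=8$ or $9$. The target crossing density $d_{\mathrm{cross}}$ is then the density of a single $4$-vertex type. Standard flag algebra theory supplies inequalities of the shape
\[
d_{\mathrm{cross}} \;\ge\; c \;-\; \sum_{\sigma} \bigl\langle M_\sigma,\; \llbracket f_\sigma f_\sigma^{\top}\rrbracket \bigr\rangle,
\]
where $\sigma$ ranges over rooted types, $f_\sigma$ is the vector of flags of type $\sigma$, each $M_\sigma \succeq 0$, and $\llbracket\cdot\rrbracket$ denotes averaging over a uniformly random embedding of the root. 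Maximising $c$ is a semidefinite program.

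My third step is to solve this SDP numerically using CSDP or SDPA on the chosen flag set, then convert the floating-point dual witness into exact rational matrices, re-verify positive semidefiniteness symbolically, and thereby produce a rigorous certificate for $c \ge \frac{3}{8}\cdot 0.98559895$. I would strengthen the relaxation by adding linear identities specific to drawings of $K_n$---such as the Ábrego--Fernández-Merchant $k$-edge parity relations---as equality constraints before rounding, which both tightens and stabilises the bound.

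I expect the main obstacle to be twofold. \emph{Computationally:} the number of combinatorially distinct good drawings grows very rapidly with $N$, so assembling the enumeration and the tensors of flag products, and then solving an SDP of that size, requires heavy exploitation of the symmetry group of the drawing algebra (mirror/relabelling symmetries) to keep the block sizes manageable. \emph{Analytically:} because $0.98559895$ is within roughly $1.5\%$ of the conjectured truth, the optimal SDP witness is nearly tight, and naive rounding of the numerical solution will destroy feasibility; the certified lower bound must therefore be produced by a carefully engineered rounding scheme that preserves the positive semidefinite constraints to very high precision. Overcoming these two obstacles---and not the high-level flag algebra setup---is where the real work of the proof lies.
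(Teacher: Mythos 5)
Your plan is essentially the paper's proof: reduce to bounding the limiting density of the planar (non-crossed) $K_4$ rotation system, run a flag-algebra SDP over the catalogue of realizable rotation systems of good drawings of a small complete graph, and certify the bound with exact rational positive semidefinite matrices obtained by rounding the numerical solution. The only substantive discrepancy is the size of the base set: the paper works with the $22{,}730$ realizable rotation systems on $7$ elements because $\mathcal{E}_8$ is already computationally out of reach, whereas you propose $N=8$ or $9$; otherwise the approach matches.
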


We also investigate spherical drawings of $K_n$. We recall that in a {\em spherical geodesic} drawing of a graph, the host surface is the sphere, and each edge is a minimum distance geodesic arc joining its endpoints. The {\em spherical geodesic crossing number} $\Cr_{S^2}(G)$ of a graph $G$ is the minimum number of crossings in a spherical geodesic drawing of $G$. This crossing number variant is of interest not only naturally in its own, but also by its connection, unveiled by Wagner~\cite{Wagner}, to the Spherical Generalized Upper Bound Conjecture.

We note that Hill's conjecture also applies to spherical geodesic drawings, since Hill's construction can be realized as a spherical geodesic drawing. Using analogous techniques as in the proof of Theorem~\ref{thm:maintheorem}, we show the following.

\begin{theorem}\label{thm:spherical}
\[
\lim_{n\to\infty} \frac{\Cr_{S^2}(K_n)}{H(n)} > 0.99635588.
\]
\end{theorem}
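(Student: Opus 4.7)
The plan is to adapt the flag algebra argument used for Theorem~\ref{thm:maintheorem} by exploiting the extra rigidity of spherical geodesic drawings. As in the proof of Theorem~\ref{thm:maintheorem}, the starting point is the identity expressing $\Cr(D)$ (for a drawing $D$ of $K_n$) as an average of crossing numbers of induced sub-drawings on a fixed small number $k$ of vertices, so a lower bound on the crossing density is obtained from a lower bound on the average crossing density of induced $k$-vertex sub-drawings, weighted by their frequencies. Flag algebra produces such a lower bound as the optimum of a semidefinite program whose variables run over the finitely many isomorphism types of good $k$-vertex sub-drawings, and whose constraints express that densities come from an actual drawing of $K_n$.

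The key point for the spherical case is that the set of realizable small sub-drawings is strictly smaller. Concretely, any induced $k$-vertex sub-drawing of a spherical geodesic drawing of $K_n$ is itself a spherical geodesic drawing of $K_k$, so only those isomorphism types of good drawings of $K_k$ that can be realized by great-circle arcs on $S^2$ may have positive density. First I would, for $k$ equal to the same value used in the proof of Theorem~\ref{thm:maintheorem} (say $k=8$), enumerate all good drawings of $K_k$ and decide, for each of them, whether it admits a spherical geodesic realization; this can be done by testing each candidate rotation system against the necessary geometric conditions (for instance, consistency with the order type of a point set on $S^2$). The flags whose underlying drawings fail this test are forced to have density zero.

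Next I would feed exactly the same semidefinite program as in the proof of Theorem~\ref{thm:maintheorem}, but with the additional linear constraints setting the densities of non-realizable flags to zero, into an SDP solver, and round the numerical solution to an exact certificate in $\Rat$ as is standard in flag-algebra computations. Since the feasible region shrinks, the optimum, which is a lower bound for $\lim_{n\to\infty}\Cr_{S^2}(K_n)/H(n)$, can only increase, and the claim is that it improves from $0.98559895$ to $0.99635588$. Together with Hill's upper bound $\Cr_{S^2}(K_n)\le H(n)$, which holds because Hill's construction is a spherical geodesic drawing, this yields Theorem~\ref{thm:spherical}.

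The main obstacle is the combinatorial-geometric step of certifying which $k$-vertex good drawings are spherically geodesic. Naively one would need to test realizability of each rotation system by a point configuration on $S^2$, a problem closely related to order-type realizability which is computationally delicate. I expect to circumvent a full realizability decision by using only a set of necessary conditions that are strong enough to rule out sufficiently many flags to reach the target bound; for instance, any subset of four vertices in a spherical geodesic drawing has a crossing pattern completely determined by whether the four points lie in convex position on $S^2$, and this restriction propagates to constraints on $5$- and higher-vertex sub-drawings. A secondary obstacle is the sheer size of the SDP, which is handled by the same symmetry reductions and rounding techniques used for Theorem~\ref{thm:maintheorem}.
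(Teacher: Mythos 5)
Your overall architecture --- run the same flag-algebra SDP but over a strictly smaller family of admissible small sub-drawings --- is exactly the paper's, but the step you yourself flag as the ``main obstacle'' is precisely the one the paper resolves and you leave open, so as written the argument does not go through. You propose to decide, for each good drawing of $K_k$, whether it admits a spherical geodesic realization, and then to fall back on unspecified ``necessary conditions'' because that decision problem is delicate. The paper never attempts geodesic realizability. Instead it passes to the strictly larger class of \emph{convex} drawings (every spherical geodesic drawing is convex, so a lower bound for convex drawings is a lower bound for spherical geodesic ones), and convexity is detected by a purely combinatorial test on rotation systems: by the characterization of Arroyo, McQuillan, Richter, and Salazar~\cite{convex} --- of which only the easy ``only if'' direction is actually needed --- a good drawing of $K_n$ is convex only if every induced $K_5$ is one of the three rectilinear types $D_1,D_2,D_3$ of Figure~\ref{fig:fig2}. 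This is the concrete, checkable necessary condition your sketch is missing; your remark about four points in convex position on $S^2$ points in the right direction, but the condition that does the work lives at the level of induced $K_5$s, not $K_4$s, and must be stated so that it can be propagated combinatorially when building the family $\cc_8$ from $\cc_5$.

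A second, quantitative gap: the paper does not simply add zero-density constraints to the computation behind Theorem~\ref{thm:maintheorem}. That computation is carried out on $\ee_7$ (realizable rotation systems on $7$ elements), because $\ee_8$ is computationally out of reach. The convexity restriction shrinks the admissible family so drastically ($|\cc_8|=7{,}360$ versus $|\ee_7|=22{,}730$) that the SDP can be run one level higher, on $8$-element rotation systems, and this jump in $k$ is an essential part of why the bound improves as much as it does. Your monotonicity observation (shrinking the feasible region can only improve the optimum) is correct, but merely constraining the $k=7$ program would give no reason to expect the stated constant. Finally, the upper bound $\Cr_{S^2}(K_n)\le H(n)$ from Hill's construction is context, not an ingredient of the proof: the theorem is a one-sided inequality.
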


Actually we prove this last bound not only for spherical geodesic drawings, but for the more general class of {\em convex} drawings~\cite{convex,convex2}. A drawing $D$ of $K_n$ in the sphere is {\em convex} if, for every 3-cycle $C$, there is a closed disc $\Delta$ bounded by $C$ with the following property: for any two vertices $u, v$ contained in $\Delta$, the edge $uv$ is contained in $\Delta$. (See Figure~\ref{fig:fighk} for examples of nonconvex drawings). We prove that the bound in Theorem~\ref{thm:spherical} holds for convex drawings. Thus in particular it holds for spherical geodesic drawings, as it is easy to see that these drawings are convex.

\begin{figure}[ht!]
\begin{center}
\includegraphics[width=8cm]{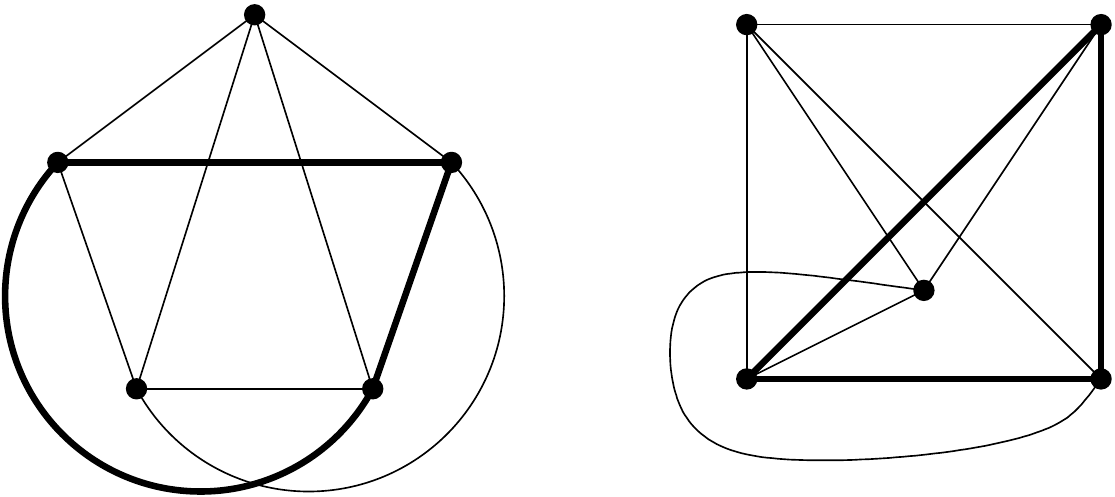}
\caption{We illustrate the two (up to isomorphism) good drawings of $K_5$ that are nonconvex. We remark that, even though not explicitly illustrated here, the host surface of these drawings is the sphere. In each case we illustrate with thick edges a $3$-cycle that witnesses that the drawing is nonconvex.}
\label{fig:fighk}
\end{center}
\end{figure}

\subsection{Previous work on Hill's conjecture}\label{subsec:previous}

We are aware of three distinct constructions that yield drawings of $K_n$ with exactly $H(n)$ crossings. Hill's construction~\cite{HararyHill} produces {\em cylindrical} drawings, which are drawings in which the vertices are drawn on two concentric circles, and no edge intersects any of these circles, except at its endpoints. Bla\v zek and Koman's construction~\cite{BlazekKoman} yields $2$-{\em page drawings} of $K_n$, that is, drawings in which every vertex lies on the $x$-axis, and each edge lies (except for its endpoints) either in the upper or in the lower halfplane. 
Very recently, 
%\'Abrego et al.~\cite{NonShellable} 
\'{A}brego, Aichholzer, Fern\'{a}ndez-Merchant, Ramos, and Vogtenhuber~\cite{NonShellable}
described a variant of Hill's construction that yields drawings of $K_n$ with $H(n)$ crossings, for every odd $n\ge 11$. 
We refer the reader to Figures 13.3 and 13.4 in~\cite{SzekelySurvey} for lively descriptions of the cylindrical drawings devised by Hill and the $2$-page drawings of Bla\v zek and Koman. For readers convenience, we reproduce these figures in Figures~\ref{fig:13-3} and \ref{fig:13-4}.

\begin{figure}[ht!]
\begin{center}
\vc{\includegraphics[width=4cm,page=1]{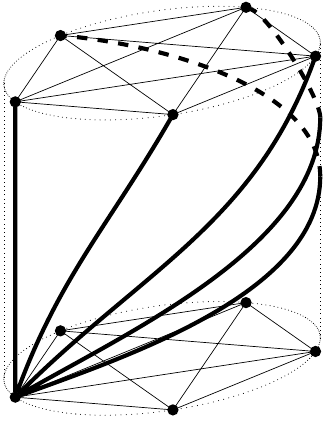}}
\hskip 2cm
\vc{\includegraphics[width=4.5cm,page=2]{fig-13-3.pdf}}
\caption{A cylindrical drawing of a $K_{10}$ is depicted on the left. The top of the cylinder is rotated for better visualization and only edges from one vertex from the bottom are drawn. The corresponding cylindrical drawing of $K_{10}$ in the plane is on the right.}
\label{fig:13-3}
\end{center}
\end{figure}

\begin{figure}[ht!]
\begin{center}
\vc{\includegraphics[width=14cm,page=1]{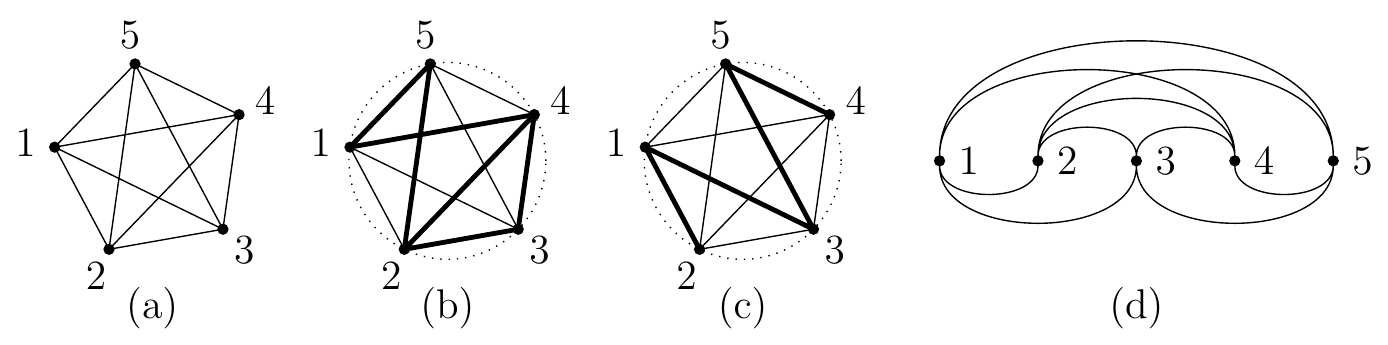}}
\caption{ The Bla\v{z}ek and Koman drawing of $K_5$. (a) A drawing of $K_5$ in the plane. (b) Edges with positive slope are drawn in the upper halfplane. (c) Edges with negative slope are drawn in the lower halfplane. (d) The actual drawing in the usual $2$-page representation, with the edges in (b) in the upper halfplane and the edges in (c) in the lower halfplane.}
\label{fig:13-4}
\end{center}
\end{figure}

Hill's conjecture has been verified both for $2$-page~\cite{2Page} and for cylindrical~\cite{Cylindrical} drawings. It is also known that the conjecture holds for {\em monotone} drawings, that is, drawings in which each edge is drawn as an $x$-monotone curve~\cite{Monotone2,BalkoMonotone}. The new construction in~\cite{NonShellable} yields drawings that are neither $2$-page nor cylindrical, but they satisfy a property called {bishellability}. 
%\'Abrego et al.~\cite{Bishellable} \todo{names} have proved that Hill's conjecture holds for bishellable drawings. 
In \cite{Bishellable}, it was proved that Hill's conjecture holds for bishellable drawings. 
 This last result implies Hill's conjecture for $2$-page, cylindrical, and monotone drawings, as all these classes of drawings are bishellable.

A straightforward counting argument shows that if Hill's conjecture holds for some odd $n$, then it also holds for $n+1$. In its full generality (that is, not for specific classes of drawings), the conjecture has only been verified for $n\le 12$. For $n\le 10$ this appears to have been reported first in~\cite{Guy10}; recently, McQuillan and Richter~\cite{DanBruce} gave a computer-free verification of Hill's conjecture for $n=9$ (and, by the previous observation, for $n=10$). Pan and Richter~\cite{PanRichter} gave a computer-assisted proof for $n=11$ (and hence for $n=12$). Hill's conjecture for $n\le 12$ has also been verified in~\cite{EuroCG15}. This last computer-assisted verification was done under the setting of rotation systems, a framework on which we also heavily rely in this work.

The conjecture for $n=13$ states that $\Cr(K_{13})=225$. An elementary counting using $\Cr(K_{11})=H(11)=100$ shows that $\Cr(K_{13}) \ge 217$. 
McQuillan, Pan, and Richter~\cite{K13} have ruled out the possibility that $\Cr(K_{13})=217$, and since $\Cr(K_{13})$ is an odd number~\cite{Parity}, it follows that $\Cr(K_{13})\in \{219,221,223,225\}$. This was further narrowed in \cite{EuroCG15}, finding that $\Cr(K_{13}) \in \{223,225\}$.

An elementary counting using that $\Cr(K_{13}) \ge 223$ shows that $\Cr(K_n) \ge \frac{223}{17160}n(n-1)(n-2)(n-3) > (0.8317+o(1))\ H(n)$. However, the best general lower bounds known for $\Cr(K_n)$ are obtained by exploiting the close relationship between the crossing numbers of complete and complete bipartite graphs.

Recall that Zarankiewicz's conjecture states that $\Cr(K_{p,q}):=Z(p,q):= \bigfloor{\frac{p}{2}}\bigfloor{\frac{p-1}{2}}\bigfloor{\frac{q}{2}}\bigfloor{\frac{q-1}{2}}$, for all positive integers $p,q$~\cite{Beineke,Decline,Turan}. It follows from a result in~\cite{RichterThomassen} that
\begin{equation}\label{eq:eq0}
{L_1} := \lim_{n\to\infty} \frac{\Cr(K_{n,n})}{Z(n,n)} \text{\hglue 0.4 cm {\rm and} \hglue 0.4 cm}  {L_2} := \lim_{n\to\infty} \frac{\Cr(K_n)}  {H(n)}
\end{equation}
both exist, and that $L_2 \ge L_1$.

A counting argument using that $\Cr(K_{5,n})=Z(5,n)$~\cite{Kleitman} implies that $L_1 \ge 0.8$. De~Klerk, Maharry, Pasechnik, Richter, and Salazar~\cite{DeKlerk1} used semidefinite programming (SDP) techniques to give a lower bound on $\Cr(K_{7,n})$, from which it follows that $L_1 > 0.83$. De~Klerk, Pasechnik, and Schrijver~\cite{DeKlerk2} also used SDP to give a lower bound on $\Cr(K_{9,n})$, and from this bound it follows that $L_1 > 0.859$. We also note that for each fixed integer $m\ge 3$, it is a finite problem to decide whether or not Zarankiewicz's conjecture holds for $K_{m,n}$, for every $n\ge m$~\cite{christian}.

Norin and Zwols (unpublished; see~\cite{Norin}) used flag algebras to show that $L_1 > 0.905$. By (\ref{eq:eq0}) this implies that $\lim_{n\to\infty} {\Cr(K_n)}/{H(n)} > 0.905$. Prior to our work, this was the best asymptotic lower bound known for $\Cr(K_n)$. In Section~\ref{sec:concludingremarks} we further discuss the work by Norin and Zwols, and explain why we can give better asymptotic lower bounds for $\Cr(K_n)$.

For a thorough recent survey of Zarankiewicz's and Hill's conjectures, we refer the reader to~\cite{SzekelySurvey}. 

We finish this survey of previous results with a few words on the spherical geodesic crossing number. This notion was introduced by Moon~\cite{Moon}, who proved the intriguing result that if one takes a random spherical drawing of $K_n$ ($n$ points are placed randomly in the sphere, and each pair of points is joined by a shortest geodesic arc), then the expected number of crossings, divided by $H(n)$, is asymptotically $1$.  
This gives a very rich set of asymptotic extremal examples. In such problems, it seems to be difficult to obtain an exact asymptotic bound using flag algebra methods.
As far as we know, the best lower bound previously known for $\Cr_{S^2}(K_n)$ is the same (asymptotically at least $0.905$) as for $\Cr(K_n)$. 

\subsection{An overview of our strategy}

Our proof makes essential use of flag algebras. This powerful tool, introduced by Razborov~\cite{Raz07}, has been the basis of several recent groundbreaking results in a variety of combinatorial and geometric problems, such as~\cite{BaberTalbot2011,BaloghC5,RBT,tripartite,HatamiHKNR13,KralMachSereni2012,PikhurkoV:2013,Raz08}, to name just a few.

Although developed in a more general setting, flag algebras in particular provide a formalism to tackle combinatorial problems of an extremal nature, in which a result of an asymptotic nature is seeked. Using flag algebras, one can find asymptotic estimates on the density of combinatorial objects, given some information on the structure of these objects for small size instances.

In a nutshell, to prove Theorem~\ref{thm:maintheorem} we exploit the fact that we have a complete understanding of all the good drawings of $K_7$~\cite{EuroCG15}, and thus of their rotation systems. (In Section~\ref{subsec:den} we review the notions of a good drawing and of a rotation system). With this information, using flag algebras we show that out of the ${n\choose 4}$ drawings of $K_4$ induced from a good drawing $D$ of $K_n$ (for every $n$ sufficiently large), less than (roughly) $0.6305{n\choose 4}$ can have $0$ crossings. Therefore $D$ must have more than $(1-0.6305){n\choose 4}=0.3695{n\choose 4}$ crossings, and thus $\Cr(K_n) > 0.3695{n\choose 4}$. Theorem~\ref{thm:maintheorem} is just an equivalent way of writing this last inequality, using a more precise rounding of the actually computed bounds.

For Theorem~\ref{thm:spherical} we proceed in an analogous manner. For this case, we use that we have the full catalogue of rotation systems that are induced from convex drawings of $K_8$. We obtain that out of the ${n\choose 4}$ drawings of $K_4$ induced from a convex drawing of $K_n$, less than (roughly) $0.6272{n\choose 4}$ can have $0$ crossings.

A more detailed outline of our arguments is given in Section~\ref{sec:overview}, where besides reviewing the concepts of good drawings and rotation systems, we introduce the notion of density, which plays a fundamental role in the theory of flag algebras. In Section~\ref{sec:reduction} we state Theorems~\ref{thm:conv1} and~\ref{thm:conv2}, two results in the language of flag algebras, and show that Theorems~\ref{thm:maintheorem} and~\ref{thm:spherical}, respectively, follow as easy consequences. The rest of the paper is then devoted to the proof of Theorems~\ref{thm:conv1} and~\ref{thm:conv2}.

\section{Densities and rotation systems}\label{sec:overview}

In this section we introduce the concepts of rotation systems and densities, which are central to the proofs of Theorems~\ref{thm:maintheorem} and~\ref{thm:spherical}. We will motivate the introduction of these notions by explaining their roles in the proof. 

\subsection{Densities in drawings of $K_n$}\label{subsec:den}

We start by recalling that a drawing of a graph is {\em good} if (i) no two adjacent edges intersect, other than at their common endvertex; (ii) no two edges intersect each other more than once; and (iii) every intersection of two nonadjacent edges is a crossing, rather than tangential. %; and (iv) no three edges cross at a common point. 

It is easy to show that every crossing-minimal drawing of a graph is necessarily good. Since we will only deal with crossing-minimal drawings (and with their induced subdrawings), we will assume throughout this work that all drawings under consideration are good.

In our context, we aim to find an asymptotic lower bound for $\Cr(K_n)$. It is easy to show that if $D$ is a good drawing of $K_n$, then each of the ${n\choose 4}$ drawings of $K_4$ induced by $D$ has exactly $0$ or $1$ crossings. Each crossing appears in exactly one such $K_4$, so our aim can be stated equivalently as follows: find an asymptotic {\em upper} bound for the proportion of non-crossing $K_4$s in a drawing of $K_n$. 

Formally, for a drawing $D$ of $K_n$ let $d(\kf;D)$ denote the probability that if we choose $4$ vertices at random from $D$, the corresponding drawing of $K_4$ induced from $D$ by these $4$ vertices has $0$ crossings. Letting $\Cr(D)$ denote the number of crossings in $D$, the above definition then implies that $\Cr(D) = \bigl(1-d(\kf;D)\bigr) {n\choose 4}$. The notation $\kf$ hints to the unique (up to isomorphism) drawing of $K_4$ with $0$ crossings (see left hand side of Figure~\ref{fig:fig1}). 

Thus $0\le d(\kf;D) \le 1$ for any drawing $D$ of $K_n$ with $n\ge 4$. Since $K_5$ cannot be drawn without crossings, it follows that $d(\kf;D)<1$ if $D$ is a drawing of $K_n$ with $n=5$ (and, actually, for any integer $n\ge 5$).

An asymptotic reading of Hill's conjecture is that $\Cr(K_n)=(3/8){n\choose 4}+O(n^3)$, and so this conjecture predicts that $d(\kf;D)$ is asymptotically at most $(1-3/8)=0.625$. What we establish in this paper is that $d(\kf;D)$ is asymptotically less than (roughly) $0.6305$. Consequently, $\Cr(K_n)/{n\choose 4}$ is asymptotically greater than $1-0.6305=0.3695$. An equivalent way to say this, as stated in Theorem~\ref{thm:maintheorem}, is that $\Cr(K_n)/H(n)$ is greater than $0.3695/0.375 > 0.985$.

Our approach consists of estimating $d(\kf;D)$, where $D$ is a crossing-minimal drawing of $K_n$ for some large integer $n$, by exploiting our complete knowledge of {all} good drawings of $K_n$ for small values of $n$, and in particular for $n=7$ and $n=8$.

With Theorem~\ref{thm:maintheorem} in mind, suppose for a moment that we limit ourselves to using the information that $\Cr(K_7)=9$. From this we obtain that for every drawing $D_7$ of $K_7$ we have $d(\kf;D_7) \le \alpha:=(1-9/{7\choose 4})\approx 0.742$. This readily implies that $d(\kf;D) \le \alpha$ for every drawing $D$ of $K_n$ with $n\ge 7$. If there existed arbitrarily large such drawings $D$ with $d(\kf;D)=\alpha$, this would mean that each induced subdrawing of $K_7$ is crossing-minimal. 

This is already impossible for $n=8$: there are no drawings of $K_8$ in which each induced subdrawing of $K_7$ has exactly $9$ crossings. Loosely speaking, it is not possible to ``pack'' $8$ crossing-minimal drawings of $K_7$ into a drawing of $K_8$. Our approach to get the much better estimate $d(\kf;D)< 0.6305$ (for large $n$) is to take the full catalogue of {\em all} good drawings of $K_7$, and use flag algebras to investigate how these can be packed into a good drawing of $K_n$, for large $n$.

\subsection{Rotation systems}\label{subsec:rot}

To achieve this last goal, we start by turning the topological problem at hand into a combinatorial one. Instead of considering directly drawings of complete graphs, we work with {\em rotation systems}. A rotation system combinatorially encodes valuable information of a drawing, by recording, for each vertex $v$, the cyclic order in which the edges incident with $v$ leave $v$ (see Figure~\ref{fig:fig1}). Thus the rotation system of a drawing of $K_n$ is a collection of $n$ cyclic permutations. In general, an {\em abstract rotation system}~\cite{kyncl1} on a set $S$ of $n$ elements is a collection of $n$ cyclic permutations, where each element $s\in S$ has an assigned cyclic permutation of the other $n-1$ elements, the {\em rotation} at $s$. We often use $s{:}s_1 s_2 \ldots s_{n-1}$ to denote that the cyclic permutation assigned to $s$ is $s_1s_2\ldots s_{n-1}$. We say that $S$  is the {\em ground set} of the abstract rotation system.

Throughout this work, for brevity, we shall refer to an abstract rotation system simply as a rotation system.

\begin{center}
\begin{figure}[ht]
\scalebox{0.95}{\input{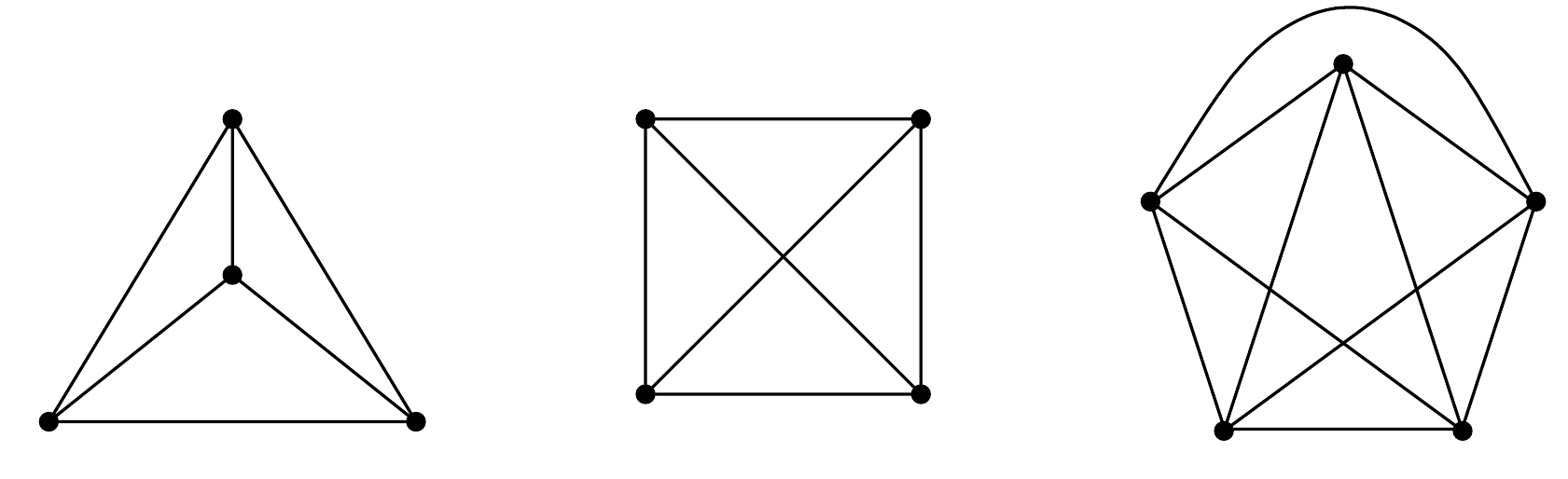_t}}
\caption{The left hand side drawing of $K_4$ induces the rotation system $N_4:=\{1{:}234,2{:}143,3{:}124,4{:}132\}$. The drawing of $K_4$ in the center induces the rotation system $\{1{:}243, 2{:}143, 3{:}124,4{:}123\}$. The drawing $D_3$ of $K_5$ on the right hand side induces the rotation system $\{  1{:}2543, 2{:} 1435, 3{:}1542, 4{:}1532, 5{:}1243\}$. We remark that since the rotation at each vertex is a {\em cyclic} permutation of the other vertices, we may alternatively write this last rotation system, for instance, as $\{ 1{:}3254, 2{:} 3514, 3{:}1542, 4{:}2153, 5{:}3124\}$. 
}
\label{fig:fig1}
\end{figure}
\end{center}

Two rotation systems are {\em isomorphic} if each of them can be obtained from the other simply by a relabelling of its elements. An abstract rotation system is {\em realizable} (respectively, {\em convex}) if it is isomorphic to the rotation system induced by a good drawing of $K_n$ (respectively, by a convex drawing of $K_n$). Every convex rotation system is realizable, as the set of convex drawings is a (proper) subset of the collection of good drawings.

Given a rotation system $R$ on a set $S$ of $n$ elements, and a subset $S'$ of $S$, $R$ naturally induces a rotation system (a {\em rotation subsystem}) on $S'$, simply by removing from $R$ all the appearences of the elements in $S\setminus S'$. For instance, if $R$ is the rotation system $\{ 1{:}234, 2{:}143, 3{:}142, 4{:}123\}$ on $S=\{1,2,3,4\}$, and we let $S'=\{1,2,4\}$, then the rotation system on $S'$ induced by $R$ is $R'=\{ 1{:}24, 2{:}14, 4{:}12\}$.

\subsection{Densities in rotation systems}\label{subsec:denrot}

The notion of density of $\kf$ in a drawing of $K_n$ gets naturally extended to rotations. In general, if $R,R'$ are rotation systems, then we let $d(R';R)$ denote the probability that a randomly chosen rotation system of $R$ with $|R'|$ elements is isomorphic to $R'$. Note that if $|R'|>|R|$, then $d(R';R)=0$.

There is (up to isomorphism) a unique rotation system $N_4$ on $4$ elements induced by a drawing of $K_4$ with no crossings; again we refer the reader to Figure~\ref{fig:fig1}, in whose caption $N_4$ is presented.

For a (realizable or not) rotation system $R$ on $n\ge 4$ elements, let $d(N_4;R)$ denote the probability that a randomly chosen rotation subsystem of $R$ with $4$ elements is isomorphic to $N_4$. Clearly, if $R$ is realized by a drawing $D$ of $K_n$, then $d(\kf;D)=d(N_4;R)$. Thus, in order to prove Theorem~\ref{thm:maintheorem}, it suffices to show that $d(N_4;R)<0.6305$ for every sufficiently large realizable rotation system $R$. For Theorem~\ref{thm:spherical}, we show that the bound $d(N_4;R)<0.6272$ holds if $R$ is convex.

We know the family $\ee_7$ of $22{,}730$ non-isomorphic realizable rotation systems on $7$ elements (this is discussed in Section~\ref{sec:real7}). A trivial, but key observation, is that if $R$ is a realizable rotation system on $n\ge 7$ elements, then each of the rotation subsystems of $R$ on $7$ elements is (isomorphic to a rotation) in $\ee_7$.

What we show is that {\em if $R$ is a realizable rotation system on $n$ elements {such that each of its rotation subsystems on $7$ elements is in $\ee_7$}, then $d(N_4;R)<0.6305$} (as long as $R$ is sufficiently large). We show this by using tools from the flag algebra framework. The size $22{,}730$ turns out to be small enough to be handled with these techniques.

For Theorem~\ref{thm:spherical} we proceed in a similar way. The improvement over the general bound in Theorem~\ref{thm:maintheorem} is obtained using the set $\cc_8$ of convex realizable systems, which is also small enough ($7{,}360$ rotations) to use the flag algebra approach.

\section{Convergent subsequences of rotation systems: \\ proof of Theorems~\ref{thm:maintheorem} and~\ref{thm:spherical}}\label{sec:reduction}

In this section we show that Theorems~\ref{thm:maintheorem} and~\ref{thm:spherical} follow from two results on sequences of rotation systems. These statements involve the notion of convergence, from the flag algebra framework. 

Let $R_1,R_2,\ldots,$ be an infinite sequence of rotation systems, where $|R_i|<|R_{i+1}|$ for $i=1,2,\ldots$. The sequence $R_1,R_2,\ldots$ is {\em convergent} if, for each fixed rotation system $R'$, the sequence $\{d(R';R_i)\}_{i=1}^\infty$ converges.

A standard compactness argument, using Tychonoff's theorem, shows that every infinite sequence of rotation systems has a convergent subsequence. In particular, there exist convergent sequences of realizable, and of convex, rotation systems. Such convergent sequences are the central objects in the next statements which, as we shall see shortly, easily imply Theorems~\ref{thm:maintheorem} and~\ref{thm:spherical}, respectively.%\todo{Are these the right numbers? Taken from results.txt. Version sent by Bernard had different numbers. BL: These numbers are correct, just the last digit was not right - one needs to round up.}

\medskip

\begin{theorem}\label{thm:conv1}
Let $R_1, R_2, \ldots$ be a convergent sequence of realizable rotation systems. Then 
\[
\lim_{i\to\infty} d(N_4;R_i) < A :=  \frac{22064013752809590266065131421016}{35000000000000000000000000000000} < 0.630400393.
\]
\end{theorem}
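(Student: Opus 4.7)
The plan is to apply the flag algebra machinery of Razborov to the setting of rotation systems. First, I would observe that the convergence of $R_1, R_2, \ldots$ yields a limit functional $\phi$ defined on isomorphism classes of finite rotation systems by $\phi(R') := \lim_{i \to \infty} d(R'; R_i)$. This functional inherits the standard properties of a flag algebra limit: it is nonnegative, normalized so that $\sum_{|R'|=k} \phi(R') = 1$ for every positive integer $k$, and satisfies the chain rule $\phi(R') = \sum_{|R''|=m} d(R'; R'')\, \phi(R'')$ whenever $m \geq |R'|$. Because each $R_i$ is realizable, every induced rotation subsystem of $R_i$ on $7$ elements is isomorphic to some element of $\ee_7$, so $\phi$ vanishes outside $\ee_7$ at level $7$.

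Applying the chain rule with $|R'|=4$ and $m=7$, I would rewrite the quantity of interest as
\[
\phi(N_4) \;=\; \sum_{R'' \in \ee_7} d(N_4; R'')\, \phi(R''),
\]
so the problem becomes a bound on a linear functional over the simplex of nonnegative weights $\{\phi(R'')\}_{R'' \in \ee_7}$ subject to $\sum_{R'' \in \ee_7}\phi(R'') = 1$. A naive maximization of $d(N_4; R'')$ over $R'' \in \ee_7$ gives only the weak bound $\alpha \approx 0.742$ mentioned in the excerpt, so further constraints on $\phi$ must be exploited.

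The heart of the argument is a semidefinite programming step. For each suitable \emph{type} $\sigma$ (a labeled rotation subsystem on a small number of elements) and each list of $\sigma$-flags $F_1, \dots, F_t$ (rotation systems containing a fixed labeled copy of $\sigma$), and for any rational symmetric positive semidefinite matrix $M_\sigma$ of size $t$, the averaged expression $\bigl\llbracket \sum_{j,k}(M_\sigma)_{jk}\, F_j \cdot F_k \bigr\rrbracket_\sigma$ expands, via the flag algebra multiplication and unlabeling rules, as a linear combination of rotation systems on $7$ elements supported in $\ee_7$, and evaluates nonnegatively against $\phi$. The goal is to choose types, flag lists, and rational PSD matrices $M_\sigma$, together with nonnegative rational coefficients $\gamma_{R''}$ for $R'' \in \ee_7$, so that the flag algebra identity
\[
A \cdot \mathbf{1} \;-\; N_4 \;=\; \sum_{\sigma} \bigl\llbracket \textstyle\sum_{j,k}(M_\sigma)_{jk}\, F_j \cdot F_k \bigr\rrbracket_\sigma \;+\; \sum_{R'' \in \ee_7} \gamma_{R''}\, R''
\]
holds after both sides are projected to the span of $\ee_7$. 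Applying $\phi$ yields $A - \phi(N_4) \geq 0$, with strict inequality provided at least one nonnegative term on the right is strictly positive on $\phi$; the specific rational value of $A$ in the statement is the outcome of solving this SDP numerically and rounding the optimal matrices to nearby rational PSD ones so that the identity above remains valid exactly.

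The main obstacle is the scale of the computation: $|\ee_7| = 22{,}730$, so for each candidate type $\sigma$ one must enumerate the $\sigma$-flags, compute the expansion of every product $F_j \cdot F_k$ as a sum over the ways of gluing two flags onto a common copy of $\sigma$ inside a realizable rotation system on $7$ vertices, and assemble an SDP with large blocks. Beyond this combinatorial bookkeeping on rotations, the delicate concluding step is rigorously certifying the inequality: the floating-point SDP solution must be converted into exact rational PSD matrices so that the flag algebra identity above is an equality over $\Rat$ and then verified symbolically—this rounding process is what produces the unwieldy denominator $35\cdot 10^{30}$ that appears in the stated value of $A$.
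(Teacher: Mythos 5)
Your proposal follows essentially the same route as the paper: both define the limit homomorphism $\phi$, reduce to bounding $\phi(N_4)$ as a linear functional over the simplex of weights on $\ee_7$, and certify the bound $A$ via an SDP-generated sum of squares $\sum_\sigma \llbracket F^T M_\sigma F\rrbracket_\sigma$ with exactly rational positive semidefinite matrices (your slack coefficients $\gamma_{R''}$ are precisely the paper's condition $p(N_4,R)+c_R\le A$ in disguise). The only content you leave unspecified is the concrete choice of types and flags (the paper uses $8$ types — one vertex, three vertices, and the six labeled $K_5$-rotations — with $1803$ flags), which is exactly the computational payload the paper also delegates to its electronic supplement.
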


\medskip

\begin{theorem}\label{thm:conv2}
Let $R_1, R_2, \ldots$ be a convergent sequence of convex rotation systems. Then 
\[
\lim_{i\to\infty} d(N_4;R_i) < B := \frac{43909978466574504806937629255000}{70000000000000000000000000000000} < 0.627285407.
\]
\end{theorem}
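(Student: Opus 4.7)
The plan is to prove Theorem~\ref{thm:conv2} by the flag algebra framework applied to convex rotation systems, in direct parallel with the proof of Theorem~\ref{thm:conv1}, but replacing the catalogue $\ee_7$ with the catalogue $\cc_8$ of the $7{,}360$ non-isomorphic convex rotation systems on $8$ elements. The key observation, already recorded in Section~\ref{sec:overview}, is that every rotation subsystem on $8$ elements of each $R_i$ lies in $\cc_8$, because an induced subdrawing of a convex drawing is itself convex. Hence the asymptotic density of $R_i$ on $8$-element rotation systems is concentrated on $\cc_8$, and
\[
\lim_{i\to\infty} d(N_4; R_i) = \sum_{F\in\cc_8} x_F\, d(N_4; F),
\]
where $x_F \ge 0$ and $\sum_{F\in\cc_8} x_F = 1$.

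The next step is to set up the flag-algebra semidefinite program. For each convex type $\sigma$ on a small labelled ground set, I enumerate the $\sigma$-flags up to a chosen size and compute, via the standard averaging formulas for flags of rotation systems, the coefficients that express each product $F_i \cdot F_j$ as a linear combination of the unlabelled convex rotation systems in $\cc_8$. The Cauchy--Schwarz content of the flag algebra then says that for any positive semidefinite matrix $M_\sigma$ indexed by the $\sigma$-flags, the limit of $\sum_{i,j} (M_\sigma)_{ij}\, \llbracket F_i\cdot F_j\rrbracket_\sigma$ is non-negative along the sequence. Assembling all such certificates, together with the linear identity above and the normalisation $\sum_F x_F = 1$, produces an SDP whose optimum is a valid upper bound on $\lim_{i\to\infty} d(N_4; R_i)$.

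The numerical solution of this SDP must then be converted into an exact rational certificate. The standard procedure is to round the entries of the PSD matrices to rationals of small denominator, perturb the result so that positive semidefiniteness and every linear equality are satisfied exactly, and verify the final bound in exact arithmetic; the specific rational constant $B$ in the statement is precisely what the rounded certificate delivers. Strictness of the inequality is automatic: the true SDP optimum is strictly less than $B$, since the rounding inevitably introduces a small positive gap above the numerical optimum that is retained when we certify the exact bound.

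The two main obstacles I expect are the following. First, one has to generate $\cc_8$ cleanly: since convexity of a drawing is defined topologically, a purely combinatorial criterion that can be checked on abstract rotation systems on $8$ elements must be pinned down and implemented, and then the $7{,}360$ non-isomorphic survivors catalogued without duplicates. Second, even at size $8$ the number of $\sigma$-flags and the size of the resulting SDP are substantial, so the combination of numerical SDP solving with sign-preserving rational rounding will be the technically delicate step. Once these pieces are in place, the closing argument follows the same template as the proof of Theorem~\ref{thm:conv1}, merely with $\cc_8$ and $B$ in place of $\ee_7$ and $A$.
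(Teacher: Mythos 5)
Your proposal follows essentially the same route as the paper: flag-algebra Cauchy--Schwarz certificates over the catalogue $\cc_8$ of $7{,}360$ convex rotation systems, an SDP solved numerically and then rounded to an exact rational positive semidefinite certificate yielding the constant $B$, exactly as in the paper's proof (which reuses the proof of Theorem~\ref{thm:conv1} with $\cc_8$, five types, and $B$ in place of $\ee_7$, eight types, and $A$). The one obstacle you flag --- a combinatorial criterion for convexity --- is resolved in the paper by the characterization that a good drawing is convex iff every induced $K_5$ is one of $D_1,D_2,D_3$, which reduces the generation of $\cc_8$ to the same extension-from-$\cc_5$ procedure used for $\ee_7$.
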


\medskip

The rest of this paper will be devoted to the proofs of these statements. We close this section by showing how Theorem~\ref{thm:maintheorem} follows from Theorem~\ref{thm:conv1}. The proof that Theorem~\ref{thm:spherical} follows from Theorem~\ref{thm:conv2} is analogous.

\begin{proof}[Proof of Theorem~\ref{thm:maintheorem}, assuming Theorem~\ref{thm:conv1}]

Let $D_{1}, D_{2},\ldots$ be an infinite sequence of drawings such that, for $i\in\mathbb{N}$, $D_i$ is a crossing-minimal drawing of $K_i$. For $i\in\mathbb{N}$, let $R_i$ be the rotation system induced by $D_i$.

A well-known argument using Tychonoff's theorem shows that $R_1,R_2,\ldots$ has a convergent subsequence $R_{n(1)}, R_{n(2)},\ldots$. Since (as observed in Section~\ref{subsec:denrot}) $d(N_4;R_{n(i)}) = d(\kf;D_{n(i)})$ for $i=1,2,\ldots$, from Theorem~\ref{thm:conv1} we have $\lim_{i\to\infty} d(\kf;D_{n(i)}) < A$. 

The crossing-minimality of each $D_{n(i)}$ means that $\Cr(K_{n(i)})=\Cr(D_{n(i)})$ for $i\in\mathbb{N}$. Now since $\Cr(D_{n(i)}) = \bigl(1-d(\kf;D_{n(i)})\bigr) {n\choose 4}$ for each $i\in\mathbb{N}$, the convergence of $d(\kf;D_{n(1)})$, $d(\kf;D_{n(2)}),\ldots$ to a number smaller than $A$ implies that
\begin{equation}\label{eq:eq1}
\frac{\Cr(K_{n(1)})}{{n(1)\choose 4}}, \frac{\Cr(K_{n(2)})}{{n(2)\choose 4}},\ldots
\end{equation}
is a convergent sequence, whose limit is greater than $1- A$.

Since the sequence in (\ref{eq:eq1}) is a subsequence of the sequence $\{\Cr(K_n)/{n\choose 4}\}_{n=1}^\infty$, and this sequence is also convergent~\cite{RichterThomassen}, then $\lim_{n\to\infty}  \Cr(K_n)/{n\choose 4} > 1-A$. Since $\lim_{n\to\infty} H(n)/{n\choose 4} = 3/8= 0.375$, then $\lim_{n\to\infty} \Cr(K_n)/H(n) > (1-A)/0.375 > 0.98559895$.
\end{proof}

%%%%%%%%%%%%%%%%%%%%%%%%%%%%%%%%%%%%%

%%%%%%%%%%%%%%%%%%%%%%%%%%%%%%%%%%%%

\section{Small rotation systems}\label{sec:real7}

As described in Section~\ref{sec:overview}, an essential ingredient in the proof of Theorem~\ref{thm:conv1} is that we know the full collection of all non-isomorphic realizable rotation systems on $7$ elements. Analogously, to prove Theorem~\ref{thm:conv2} we use the collection of all non-isomorphic convex rotation systems on $8$ elements.

In this section we describe how these families are obtained. 

\subsection{Realizable rotation systems on $7$ elements}

For each integer $n \ge 3$, we use $\ee_n$ to denote the set of all non-isomorphic realizable rotation systems on $n$ elements.

Aichholzer and Pammer wrote code to obtain all non-isomorphic realizable rotation systems on $n$ elements for $n\le 9$, with the results reported in~\cite[Table 1]{EuroCG15} (see also~\cite{pammer}). We note that in~\cite{EuroCG15} a different notion of isomorphism (to the one used in this paper) is used. Let us say that two rotation systems $R,R'$ are {\em equivalent} if either $R$ is isomorphic to $R'$, or if $R'$ is isomorphic to the system obtained by taking the inverse of each of the rotations in $R$ (that is, if $R'$ is the {\em inverse} $R^{-1}$ of $R$). Under this terminology, in~\cite{EuroCG15} the collections of non-equivalent realizable rotation systems on $n$ elements were reported, for all $n\le 9$.

Thus the set $\mm_n$ of non-equivalent realizable rotation systems on $n$ elements can be obtained from $\ee_n$: if for some rotation $R$, both $R$ and $R^{-1}$ are in $\ee_n$, we remove one of them. Similarly, $\ee_n$ can be easily obtained from $\mm_n$. First grow $\mm_n$ by adding the inverse of each of its elements, and then run an isomorphism check to get rid of duplicates.

\begin{center}
\begin{figure}[ht!]
\centering
\scalebox{0.5}{\input{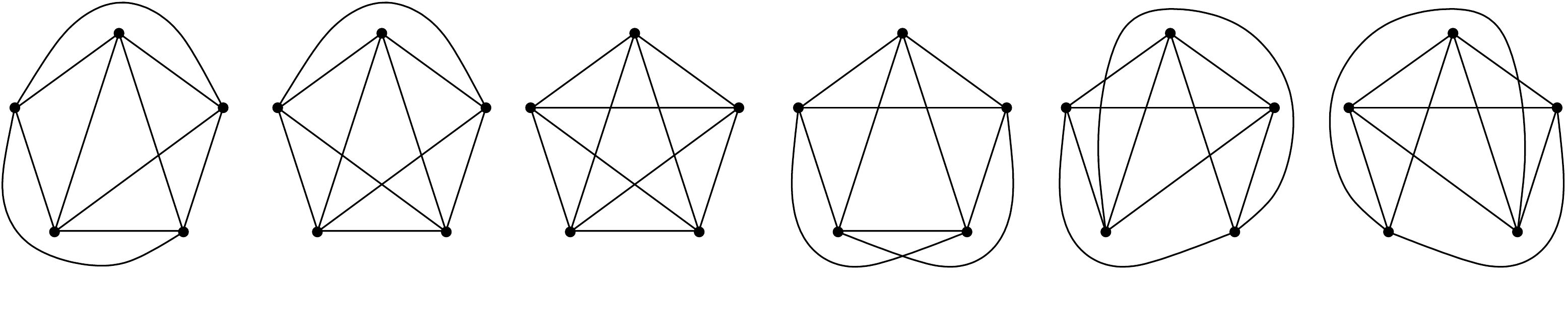_t}}
\caption{The six non-isomorphic drawings of $K_5$. Here we adopt the point of view that two drawings are isomorphic if there is an orientation-preserving self-homeomorphism of the sphere that takes one into the other. If we dropped the orientation-preserving condition, then $D_5$ and $D_6$ would be isomorphic.}
\label{fig:fig2}
\end{figure}
\end{center}

We wrote code to obtain $\ee_7$, proceeding as follows. First we obtain $\ee_5$. To achieve this, it suffices to take the collection of non-isomorphic drawings of $K_5$. Here we use the notion that two drawings are {\em isomorphic} if there is an orientation-preserving self-homeomorphism of the plane that takes one into the other. An easy exercise shows that there are exactly six non-isomorphic drawings of $K_5$, namely the ones depicted in Figure~\ref{fig:fig2}. The class $\ee_5$ consists of the rotation systems that correspond to these drawings.

Aichholzer (private communication) noted, based on his results, that a rotation system on $6$ elements is realizable if and only if each of its rotation subsystems on $5$ elements is realizable. As Kyn\v{c}l observed in~\cite[Sect.~1]{kynclsimplified}, it follows from this observation and~\cite[Theorem 1]{kynclsimplified} that a rotation system on $n\ge 5$ elements is realizable if and only if each of its rotation subsystems on $5$ elements is realizable.

From this last important observation it follows that the task of finding $\ee_6$ is straightforward. For each rotation in $\ee_5$, we try all possible ways to extend it to a rotation system on $6$ elements, and for each of these possible ways, we test whether or not each of its rotation subsystems on $5$ elements is in $\ee_5$. Finally, we perform an isomorphism check to get rid of duplicates, and finally obtain $\ee_6$. To obtain $\ee_7$ from $\ee_6$ we follow an analogous procedure.

The family $\ee_6$ has $165$ elements, and $\ee_7$ has $22{,}730$ elements. From these lists we generated $\mm_6$ and $\mm_7$, which have $102$ and $11{,}556$ elements, respectively. These coincide with the collections reported in~\cite[Table 1]{EuroCG15}, as kindly verified by Aichholzer (private communication). The sets $\ee_6$  and $\ee_7$ are available at \oururl.

%one can easily 

%As we mentioned above, from $\mm_6$ one can easily generate $\ee_6$.

%Before our calculation of $\ee_6$ Aichholzer sent us the list of $102$ elements of $\mm_6$.  Running this algorithm on the collection sent to us by Aichholzer yields the same set $\ee_6$ we obtained. This expected confirmation verified the correctness of our code to generate $\ee_{n}$ from $\ee_{n-1}$.

%Seeking an additional verification, from $\ee_7$ we obtained $\mm_7$, finding a list of $11{,}556$ rotation systems. We sent this collection to Aichholzer, who kindly verified that this is indeed the same set previously reported in~\cite[Table 1]{EuroCG15}. 

\subsection{Convex rotation systems on $8$ elements}

Arroyo, McQuillan, Richter, and Salazar~\cite{convex} have characterized convex drawings of $K_n$ as follows. A good drawing $D$ of $K_n$, with $n\ge 5$, is convex if and only if all its induced drawings of $K_5$ are isomorphic to rectilinear drawings. It is well-known that up to isomorphism there are three such drawings of $K_5$, namely $D_1,D_2$, and $D_3$ in Figure~\ref{fig:fig2}. 

Thus in order to generate the collection $\cc_n$ of convex rotation systems, for $n\ge 5$, it suffices to follow the procedure described above to obtain $\ee_n$, but in this case the foundation $\cc_5$ consists of the rotation systems that correspond to $D_1,D_2$, and $D_3$. In this way we constructed $\cc_6,\cc_7$, and $\cc_8$. This last collection consists of $7{,}360$ rotation systems, thus being even more manageable, for a flag algebra treatment, than $\ee_7$.

We note that we do not really need the full characterization from \cite{convex}. We only need the easy ``only if'' part, which is readily verified by checking that $D_4,D_5$, and $D_6$ are not convex. If we did not have the ``if'' part, we would still know that the class $\cc_8$ we constructed contains the class of convex drawings. Thus our results, in particular Theorem~\ref{thm:spherical}, would still hold without this non-trivial direction of the characterization from \cite{convex}.

\section{Flag algebras}\label{sec:flag}

This section contains a brief introduction to the flag algebra framework, in the setting of rotation systems. 
For a more detailed and general exposition, see the original paper of Razborov~\cite{Raz07}.
For more accessible introductions to flag algebras, see for instance~\cite{BaberTalbot2011,interim}.
%Readers familiar with flag algebras may safely skip this section. NOT REALLY, I think (GS). 
%We write the description for the rotation systems but it can be generalized to many different settings.
%We do not provide a complete description of the framework. 

Throughout this discussion $\cF$ is an infinite set of rotation systems, and for each $\ell\in\Nat$, $\cF_\ell$ is the set of all rotations in $\cF$ with $\ell$ elements. For our cases of interest, in the next section we will take $\rr$ to be the collection $\ee$ of all realizable rotation systems (to prove Theorem~\ref{thm:maintheorem}), or the collection $\cc$ of all convex rotation systems (to prove Theorem~\ref{thm:spherical}). 

For $R \in\cF_\ell$ and $R' \in\cF_{\ell'}$, define  $p(R,R')$ to be the probability that choosing $\ell$ vertices
uniformly at random from $R'$ induces a rotation isomorphic to $R$. Note that $p(R,R') = 0$ if $\ell > \ell'$. 

For $R \in \cF$, we denote by $V(R)$ the ground set of $R$. We use $V(R)$ to hint that we think of the ground elements of $R$ as, and call them, {\em vertices} (after all, we are interested in rotation systems that are induced by drawings of $K_n$). Although evidently $R$ is not a graph, the rotation systems that we will investigate come from drawings of $K_n$, and as such, have an identity as vertices. We let $v(R):=|V(R)|$. Note that $v(R)$ is also the number of elements (cyclic permutations) of $R$.

We start by defining algebras $\cA$ and $\cA^{\sigma}$, where $\sigma$ is any rotation system in $\cF$.
These algebras will be called \emph{flag algebras}.
Let $\mathbb{R}\cF$ be the set of all formal linear combinations of elements in $\cF$ with real coefficients.
Furthermore, let $\cK$ be the linear subspace generated by all linear combinations of the form
\begin{align}
R - \sum_{R' \in \cF_{v(R)+1}} p(R,R') \cdot R'. \label{eq:zero}
\end{align}
We define $\cA$ as the space $\mathbb{R}\cF$ factorized by $\cK$.
The space $\cA$ comes with naturally defined operations of addition, and multiplication by a real number.
To introduce the multiplication in $\cA$, we first define multiplication of two elements in $\cF$.
For $R_1, R_2 \in \cF$, and $R\in\cF_{v(R_1)+v(R_2)}$,
we define $p(R_1, R_2; R)$ to be the probability that for a randomly chosen subset $I_1$ of $V(R)$
of size $v(R_1)$, the rotation subsystems of $R$ induced by $I_1$ and $I_2:=V(R)\setminus I_1$ are isomorphic to $R_1$ and $R_2$, respectively. We set
\[R_1 \times R_2 = \sum_{R\in\cF_{v(R_1)+v(R_2)}}p(R_1,R_2;R) \cdot R.\]
The multiplication in $\cF$ has a unique linear extension to $\Real\cF$, which yields
a well-defined multiplication also in $\cA$. 
A formal proof of this can be found in~\cite[Lemma 2.4]{Raz07}.

%\todo{JB:  you call theta "injective mapping", but later theta sounds like that the actual image of $V(\rho)$. Can you "unify" the two concepts, or change the first a bit? Or just expand explanation?}
Now we introduce an algebra $\cA^\sigma$ for each $\sigma \in \cF$.
The element $\sigma$ is usually called a \emph{type} within the flag algebra framework.
Without loss of generality, assume that the vertices of $\sigma$ are labelled $1,2,\ldots,v(\sigma)$.
Define $\cF^\sigma$ to be the set of all elements in $\cF$ with a fixed \emph{embedding} of
$\sigma$, i.e., an injective mapping $\theta$ from $V(\sigma)$ to $V(R)$ such that
the image of $\theta$, denoted by $\theta(V(\sigma))$, induces in $R$ a rotation isomorphic to $\sigma$.
Following the customary flag algebra terminology, the elements of $\cF^{\sigma}$ are \emph{$\sigma$-flags}, 
and the rotation induced by $\theta(V(\sigma))$ is the \emph{root} of a $\sigma$-flag.
%We will now define $\cA^\sigma$ analogously as $\cA$, but all flags are $\sigma$-flags.
%For more detailed explanation, see~\cite{Raz07}.
%\todo{write more here}

For every $\ell\in\Nat$, we define
$\cF^{\sigma}_\ell\subset \cF^{\sigma}$ to be the set of the $\sigma$-flags from
$\cF^{\sigma}$ that have size $\ell$.
Analogously to the case for $\cA$, for two $\sigma$-flags $R, R' \in\cF^{\sigma}$ with embeddings of $\sigma$ given by $\theta, \theta'$, we set
$p(R,R')$ to be the probability that a randomly chosen subset of $v(R)-v(\sigma)$ ground elements in
$V(R')\setminus\theta'(V(\sigma))$ together with $\theta'(V(\sigma))$ induces a
substructure that is
isomorphic to $R$ through an isomorphism $f$ that preserves the embedding of $\sigma$.
In other words, the  
isomorphism $f$ has to satisfy $f(\theta') = \theta$.
Let $\Real\cF^{\sigma}$ be the set of all formal linear combinations of elements
of $\cF^\sigma$ with real coefficients, and let $\cK^\sigma$ be the linear subspace
of $\Real\cF^\sigma$ generated by all the linear combinations of the form
\[R-\sum_{R'\in\cF^\sigma_{v(R)+1}}p(R,R')\cdot R'.\]
We define $\cA^\sigma$ to be $\Real\cF^\sigma$ factorized by $\cK^\sigma$.

We now proceed to define the multiplication of two elements from $\cF^\sigma$. 
Let $R_1, R_2\in \cF^\sigma$, $R\in \cF^\sigma_{v(R_1)+v(R_2)-v(\sigma)}$, and let $\theta$ be the fixed embedding of $\sigma$ in $R$.
%As in the definition of multiplication for $\cA$, we define $p(R_1, R_2; R)$ as some probability.
Choose uniformly at random a subset of $X$ in $V(R)\setminus \theta(V(\sigma))$ of size
$v(R_1)-v(\sigma)$. Let $Y = V(R)\setminus \{ \theta(V(\sigma))  \cup  Y\}$ of
size $v(R_2)-v(\sigma)$.
We define $p(R_1, R_2; R)$ to be the probability that  $X \cup \theta(V(\sigma))$ and $Y \cup \theta(V(\sigma))$ induce rotations isomorphic to $R_1$ and $R_2$, respectively.  
This definition naturally extends to $\cA^\sigma$.

Consider an infinite sequence $(R_n)_{n \in \mathbb{N}}$, where $R_n \in \cF_n$. We note that the density $d(R; R_n)$ used in Section~\ref{sec:reduction} is simply $p(R,R_n)$ in the current setting. We use $p(R,R_n)$ in this section as this is the custom notation in flag algebra discussions. We recall from Section~\ref{sec:reduction} that $(R_n)_{n \in \mathbb{N}}$ is \emph{convergent}  
if the sequence $\bigl(p(R,R_n)\bigr)_{n\in\mathbb{N}}$ converges for every $R \in \cF$.
A standard compactness argument using Tychonoff's theorem yields that every
infinite sequence has a convergent subsequence. 
Fix a convergent sequence  $(R_n)_{n \in \mathbb{N}}$. 
For every $R \in \cF$, we set $\phi(R) = \lim_{n \to \infty} p(R, R_n)$ and linearly extend
$\phi$ to $\cA$.
We usually refer to the mapping $\phi$ as the \emph{limit of the sequence}.
The obtained mapping $\phi$ is a homomorphism from $\cA$ to $\mathbb{R}$.
Note that for every $R\in \cF$ we have $\phi(R)\geq 0$. 
Let $\Hom^+(\cA, \mathbb{R})$ be the set of all such homomorphisms, i.e., the set of all homomorphisms
$\psi$ from the algebra $\cA$ to $\mathbb{R}$ such that $\psi(R)\ge0$ for every $R\in\cF$.
An interesting, crucial fact in the theory of flag algebras, is that this set is exactly the set of all limits of convergent sequences in $\cF$~\cite[Theorem~3.3]{Raz07}.

It is possible to define a homomorphism  $\phi^\sigma$ from $\cA^\sigma$ to $\mathbb{R}$
and an \emph{unlabelling operator} $\llbracket \cdot \rrbracket_\sigma : \cA^\sigma \to \cA$
such that if $\phi^\sigma(A^\sigma) \geq 0$ for some $A^\sigma \in \cA^\sigma$, then 
$\phi(\llbracket A^\sigma \rrbracket_\sigma) \geq 0$. 
For details, see~\cite{Raz07}. 
The unlabelling operator is very useful for generating non-obvious valid inequalities
of the form $\phi(A) \geq 0$ for some $A \in \cA$. 
In particular, $\phi(\llbracket \left( A^\sigma \right)^2  \rrbracket_\sigma) \geq 0$ is always a valid inequality, and the generation of these inequalities can be somewhat automated.

\section{Proof of Theorem~\ref{thm:conv1}\label{sec:conv1}}

\begin{proof}[Proof of Theorem~\ref{thm:conv1}]
We use the flag algebra framework developed in the previous section, performing the calculations on $\mathcal{E}_7$. As we observed in Section~\ref{sec:real7}, this set has cardinality 22,730. We follow the convention from the previous section to think of the elements in the ground set of a rotation as {\em vertices}.

We used 1803 labeled flags of 8 types $\sigma_1,\ldots,\sigma_8$.
Type $\sigma_1$ is one labeled vertex and let $F_1$ be $\mathcal{E}^{\sigma_1}_{4}$. 
Type $\sigma_2$  are three labeled vertices and let $F_2$ be $\mathcal{E}^{\sigma_2}_{5}$. 
Types $\sigma_i$ for $3 \leq i \leq 8$ are all labeled rotations on 5 vertices, namely the ones associated to the drawings in Figure~\ref{fig:fig2}.
For $3 \leq i \leq 8$, let $F_i = \mathcal{E}^{\sigma_i}_{6}$. 
Notice that for all $i$ we picked the sizes of flags in $F_i$ such that the product of any two flags from $F_i$
can be expressed in $\mathcal{E}_7^{\sigma_i}$, and hence subsequently gives an equation in $\mathcal{E}_7$.

The following holds for any $\phi \in \Hom^+(\cA, \mathbb{R})$.
Let $M_1,\ldots,M_8$ be positive semidefinite matrices, where $M_i$ has the same dimension as $F_{i}$ for all $i$.
Then 
\begin{equation}\label{eq:first}
0 \leq \phi \left(  \sum_{1 \leq i \leq 8} \llbracket F_i^T M_i F_i  \rrbracket_{\sigma_i} \right) =  \phi \left( \sum_{R \in \mathcal{E}_7} c_R \cdot R \right),
\end{equation}
where $c_R$ is a real number depending on $M_1,\ldots,M_8$ for each $R$. 
The expression \eqref{eq:zero} implies that
\[
\phi \left( \yu \right) =  \phi \left( \sum_{R \in \mathcal{E}_7} p(\yu, R) \cdot R  \right).
\]
By combining this and \eqref{eq:first} we obtain the following, where (we recall from Section~\ref{sec:overview}) $N_4$ is the rotation system that corresponds to $\kf$:
\[ %]\begin{equation}\label{eq:second}
\phi \left( \yu \right) =  \phi \left( \sum_{R \in \mathcal{E}_7} p(\yu, R) \cdot R  \right) \leq \phi \left( \sum_{R \in \mathcal{E}_7} (p(\yu, R) + c_R) \cdot R \right).
\] %\end{equation}
Let $A$ be as in the statement of Theorem~\ref{thm:conv1}. By solving an instance of a semidefinite program, we found $M_1,\ldots,M_8$ such that
\[ %]\begin{equation}\label{eq:third}
p(\yu, R) + c_R \leq A
\] %\end{equation}
for all $R \in  \mathcal{E}_7$. Noting that  $\phi \left( \sum_{R \in \mathcal{E}_7} R \right) = 1$, we obtain
\[ %]\begin{equation}\label{eq:fourth}
\phi \left( \yu \right) \leq \phi \left( \sum_{R \in \mathcal{E}_7} (p(\yu, R) + c_R) \cdot R \right)\leq A \cdot \phi \left( \sum_{R \in \mathcal{E}_7} R \right) = A.
\] %\end{equation}

Let $R_1,R_2,\ldots$ be a convergent sequence of realizable rotation systems. Since $\phi(N_4) = \lim_{i\to\infty} p(N_4,R_i) = \lim_{i\to\infty} d(N_4;R_i)$, this last equation implies that $\lim_{i\to\infty} d(N_4;R_i) \le A < 0.630400393$, as claimed in Theorem~\ref{thm:conv1}. 
Although the $M_i$s in general may have real numbers as entries, the $M_i$s in our calculation they all have rational numbers as entries.
In addition, for every $i$ we construct $M_i$ as $U^TDU$, where $D$ is a diagonal matrix with non-negative entries. 
This implies that all $M_i$s are indeed positive semidefinite and the calculations of $A$ are performed exactly over rational numbers. 

Due to space limitations, we provide $\mathcal{E}_7$, $F_i$ and $M_i$ for all $i$, as well as programs that perform the calculations, in electronic files at \oururl.
The entire calculation, including generating $M_i$s, takes about 8 hours on a high performance machine. The number of variables in the $M_i$s is 242099 and the data is about 258MB.
\end{proof}

\begin{proof}[Proof of Theorem~\ref{thm:conv2}] 
%\todo{JB: a bit dense, recalling what $C_i^sigma$ would be helpful}
In this case we performed the calculations on $\cc_8$. We used 3664 labeled flags of $5$ types  $\sigma_1,\ldots,\sigma_5$.
Type $\sigma_1$ is one labeled vertex and let $F_1$ be $\cc^{\sigma_1}_{4}$, i.e., all realizable convex rotation systems on $4$ vertices, where one vertex is labeled. 
Type $\sigma_2$  are three labeled vertices and let $F_2$ be $\cc^{\sigma_2}_{5}$. 
Types $\sigma_i$ for $3 \leq i \leq 5$ are all labeled rotations on 5 vertices, namely the ones associated to the drawings $D_1,D_2$, and $D_3$ in Figure~\ref{fig:fig2}.
For $3 \leq i \leq 5$, let $F_i = \cc^{\sigma_i}_{6}$. 
Notice that for all $i$ we picked the sizes of flags in $F_i$ such that the product of any two flags from $F_i$ 
%\todo{GS: I wrote this thinking this was the natural extension from the objects in the previous proof, are these objects ok? Looks good to BL.}
can be expressed in $\cc_8^{\sigma_i}$, and hence subsequently gives an equation in $\cc_8$.

We can now pick up the proof of Theorem~\ref{thm:maintheorem} at the paragraph that starts ``The following holds\ldots'', with the following changes. Instead of having positive semidefinite matrices $M_1,\ldots,M_8$, we have only five positive semidefinite matrices $M_1,\ldots,M_5$ (here again each $M_i$ has the same dimension as $F_i$). The first summation in \eqref{eq:first} is now on $1\le i\le 5$, and every summation on $R\in \ee_7$ gets replaced by a summation on $R\in\cc_8$. Finally, instead of the constant $A$ in Theorem~\ref{thm:maintheorem}, we have the constant $B$ in Theorem~\ref{thm:spherical}. 

With these changes the proof carries over exactly as in the previous proof, finally obtaining that $\lim_{i\to\infty} d(N_4;R_i) \le B < 0.627285406$.

Due to space limitations, we provide $\mathcal{C}_8$, $F_i$ and $M_i$ for all $i$, as well as programs that perform the calculations, in electronic files at \oururl.
The entire calculation, including generating the $M_i$s, takes 7 hours on a high performance machine.
 The number of variables in $M_i$s is 865872 and the data is about 354MB.
\end{proof}

%We observed that replacing $\mathcal{R}_7$ with $\mathcal{W}_7$ gives a slightly worse bound
%\[
%\frac{22064304}{35000000000000000000000000000000} < 0.6304086948.
%\]

\section{Concluding remarks}\label{sec:concludingremarks}

As we mentioned in Section~\ref{sec:intro}, the flag algebra framework was used by Norin and Zwols~\cite{Norin} to attack another crossing number problem, namely Zarankiewicz's conjecture. Recently, Goaoc, Hubard, De Joannis De Verclos, Sereni, and Volec~\cite{limitsorder} also used flag algebras to approach a related problem in discrete geometry, namely the density of $k$-tuples in convex position in point sets in the plane.

{Norin and Zwols computed all the good drawings of $K_{3,4}$, and for each such drawing they recorded which pairs of edges cross each other. With this information, they used flag algebras to obtain the lower bound $\lim_{n\to\infty} \Cr(K_{n,n})/Z(n,n) > 0.905$. In this paper we worked with rotation systems, but we note that this approach is equivalent to the alternative (\`a la Norin-Zwols) of computing all good drawings of $K_7$ and recording, for each such drawing, which pairs of edges cross each other. This follows since from the rotation system of a drawing one can tell which pairs of edges cross each other in the drawing~\cite{gioantheorem,gioan}.} As we mentioned in Section~\ref{subsec:previous}, the Norin-Zwols result implies the bound $\lim_{n\to\infty} \Cr(K_n)/H(n) > 0.905$. The improved bound we report in this paper is explained since the information of all good drawings of $K_7$ is remarkably more extensive than the information obtained by considering all good drawings of $K_{3,4}$. 

{An earlier approach we tried involved associating to a good drawing $\dd$ of $K_m$ the $4$-uniform hypergraph $\hh_\dd$ whose vertices are the vertices of the drawing, and where four vertices form an edge if and only if the drawing of $K_4$ induced from $\dd$ on these four vertices has a crossing. We refer the reader to~\cite[Section 13.4]{SzekelySurvey} for a discussion on the connection between crossing number problems and Tur\'an-type hypergraph problems. This approach, also using flag algebras, yielded a considerably weaker lower bound than the one in Theorem~\ref{thm:maintheorem}. Obtaining poorer bounds in this setting is quite natural since, as we recalled above, with the rotation system of a drawing one can tell not only which $K_4$s have a crossing, but exactly which edges cross each other in a given $K_4$.}

We are currently working on two separate approaches to apply flag algebras to obtain improved lower bounds on the rectilinear crossing number $\rcr{(K_{n,n})}$. We can currently show that $\lim_{n\to\infty} \rcr(K_{n,n})/Z(n,n) > 0.973$, and we hope to get an even better lower bound when a set of ongoing calculations is completed. Together with Pfender and Norin, we had previously considered the special version of rectilinear drawings in which the partite classes are separated by a line. In this case, we got a lower bound of $0.99$.

Let us mention that it might be possible to improve the constants $A$ and $B$ in Theorems~\ref{thm:conv1} and~\ref{thm:conv2} by a tiny amount. The matrices $M_i$ in the proofs of these theorems were first obtained by a semidefinite programming solver. 
These matrices do not contain exact entries, and some small rounding was necessary to ensure that the $M_i$s are indeed positive semidefinite and the evaluation of $p(\yu, R) + c_R$ does not have any numerical errors. 
We have not tried to optimize the rounding process as we think the possible improvement is negligible.  

For Theorem~\ref{thm:conv1}, performing the calculations on $\mathcal{E}_8$ would likely provide a remarkable improvement. Unfortunately, the size of this set makes it out of reach for current computers. Similarly, for Theorem~\ref{thm:conv2}, performing the calculations on $\mathcal{C}_9$ would very likely result in a considerable improvement, but this set is also too big to be handled with computer power available at this time.

Aichholzer (private communication) has verified that all crossing-minimal drawings of $K_n$, for $n\le 12$, are convex. Thus it seems reasonable to conjecture that all crossing-minimal drawings of $K_n$, for every integer $n$, are convex. If this were proved, the bound in Theorem~\ref{thm:spherical} would apply for the crossing number of $K_n$.

%\redit{Mention hereditary convex. That little improvement is obtained.}

% We are currently working on taking this one step further, as the class of all convex realizable rotation systems on $9$ elements also appears to be of a manageable size to apply flag algebras tools. We expect to further refine the bound given by Theorem~\ref{thm:spherical}, but we estimate it will take several months to complete the ongoing associated computations.
\bigskip

\section*{Acknowledgements}

We thank Oswin Aichholzer for making available to us the collection $\mm_6$, and for checking that our collection $\mm_7$ agrees with the one previously found by him. This helped us verify our findings for the collections $\ee_6$ and $\ee_7$, as described in Section~\ref{sec:real7}. We also thank Carolina Medina and anonymous referees for helpful comments.

\bibliographystyle{abbrv}
\bibliography{refs.bib}

\end{document}